\newtheorem{theorem}{Theorem}[section]
\newtheorem{lemma}[theorem]{Lemma}
\newtheorem{proposition}[theorem]{Proposition}
\theoremstyle{definition}
\newcommand{\fifteenpuzzle}[1]{
\parbox[c]{3cm}{\centering
\begin{TAB}(e,0.5cm,0.5cm){|c|c|c|c|}{|c|c|c|c|}
#1
\end{TAB}}}
\newcommand{\varikon}[8]{
\parbox[c]{3cm}{\centering
\begin{tikzpicture}[every node/.style={minimum size=1.0cm},on grid]
\begin{scope}[every node/.append style={yslant=-0.5},yslant=-0.5]
  \node at (0.25,0.75) {#3};
  \node at (0.75,0.75) {#4};
  \node at (0.25,0.25) {#7};
  \node at (0.75,0.25) {#8};
  \draw[step=0.5] (0,0) grid (1,1);
\end{scope}
\begin{scope}[every node/.append style={yslant=0.5},yslant=0.5]
  \node at (1.25,-0.25) {#4};
  \node at (1.75,-0.25) {#2};
  \node at (1.25,-0.75) {#8};
  \node at (1.75,-0.75) {#6};
  \draw[step=0.5] (1,-1) grid (2,0);
\end{scope}
\begin{scope}[every node/.append    style={yslant=-0.5,xslant=1},yslant=-0.5,xslant=1]
  \node at (-0.75,1.75) {#1};
  \node at (-0.25,1.75) {#2};
  \node at (-0.75,1.25) {#3};
  \node at (-0.25,1.25) {#4};
  \draw[step=0.5] (-1,1) grid (0,2);
\end{scope}
\end{tikzpicture}}}
\newcommand{\cyclesix}[6]{
\parbox[c]{2.5cm}{\centering
\begin{tikzpicture}[shorten >=1pt,node distance=1cm,on grid,auto]
    \node[] (center) {}; 
    \node[] (1) [above right = {1*sin(90)} and {1*(cos(90)} of center]  {#1};
    \node[] (2) [above right = {1*sin(150)} and {1*(cos(150)} of center]  {#2};
    \node[] (3) [above right = {1*sin(210)} and {1*(cos(210)} of center]  {#3};
    \node[] (4) [above right = {1*sin(270)} and {1*(cos(270)} of center]  {#4};
    \node[] (5) [above right = {1*sin(330)} and {1*(cos(330)} of center]  {#5};
    \node[] (6) [above right = {1*sin(390)} and {1*(cos(390)} of center]  {#6};
    \path[->] 
        (1) edge node {} (2)
        (1) edge node {} (3)
        (1) edge node {} (5)
        (1) edge node {} (6)
        
        (3) edge node {} (1)
        (3) edge node {} (2)
        (3) edge node {} (4)
        (3) edge node {} (5)
        
        (5) edge node {} (1)
        (5) edge node {} (3)
        (5) edge node {} (4)
        (5) edge node {} (6)
        
        (2) edge node {} (1)
        (2) edge node {} (3)
        
        (4) edge node {} (3)
        (4) edge node {} (5)
        
        (6) edge node {} (1)
        (6) edge node {} (5);
\end{tikzpicture}}}
\newcommand{\cyclefive}[5]{
\parbox[c]{2.5cm}{\centering
\begin{tikzpicture}[shorten >=1pt,node distance=1cm,on grid,auto]
    \node[] (center) {}; 
    \node[] (1) [above right = {1*sin(90)} and {1*(cos(90)} of center]  {#1};
    \node[] (2) [above right = {1*sin(150)} and {1*(cos(150)} of center]  {#2};
    \node[] (3) [above right = {1*sin(210)} and {1*(cos(210)} of center]  {#3};
    \node[] (4) [above right = {1*sin(270)} and {1*(cos(270)} of center]  {#4};
    \node[] (5) [above right = {1*sin(330)} and {1*(cos(330)} of center]  {#5};
    \path[->] 
        (1) edge node {} (2)
        (1) edge node {} (3)
        
        (3) edge node {} (1)
        (3) edge node {} (2)
        (3) edge node {} (4)
        (3) edge node {} (5)
        
        (5) edge node {} (3)
        (5) edge node {} (4)
        
        (2) edge node {} (1)
        (2) edge node {} (3)
        
        (4) edge node {} (3)
        (4) edge node {} (5);
\end{tikzpicture}}}
\newcommand\cycle[2][\,]{%
  \readlist\thecycle{#2}%
  (\foreachitem\i\in\thecycle{\ifnum\icnt=1\else#1\fi\i})%
}
\begin{document}

\title{Algebraic Structure of the Varikon Box}

\author{Jason d'Eon}
\address{Dalhousie University, 6299 South Street, Halifax, Nova Scotia, Canada}
\email{js348697@dal.ca}

\author{Chrystopher L. Nehaniv}
\address{University of Waterloo, 200 University Avenue West, Waterloo, Ontario, Canada}
\email{chrystopher.nehaniv@uwaterloo.ca}

\date{June 1, 2020}

\begin{abstract}
The 15-Puzzle is a well studied permutation puzzle. This paper explores the group structure of a three-dimensional variant of the 15-Puzzle known as the Varikon Box, with the goal of providing a heuristic that would help a human solve it while minimizing the number of moves. First, we show by a parity argument which configurations of the puzzle are reachable. We define a generating set based on the three dimensions of movement, which generates a group that acts on the puzzle configurations, and we explore the structure of this group. Finally, we show a heuristic for solving the puzzle by writing an element of the symmetry group as a word in terms of a generating set, and we compute the shortest possible word for each puzzle configuration.
\end{abstract}

\maketitle

\section{Introduction}
\label{sec:introduction}
The 15-Puzzle is a permutation puzzle which consists of a 4$\times$4 grid with fifteen numbered squares and one empty space that allows the pieces to slide around. Many variants of the 15-Puzzle exist, for example, by changing the size of the grid. Aside from the 15-Puzzle, one could consider the 24-Puzzle, 8-Puzzle, or the very trivial 3-Puzzle, which correspond to a 5$\times$5 grid, a 3$\times$3 grid, and a 2$\times$2 grid respectively. In fact, there is no particular reason the board has to be square, so one could take the puzzle consisting of a 2$\times$3 grid, with five movable pieces.

The focus of this paper is on a three-dimensional permutation puzzle known as the 2$\times$2$\times$2 Varikon Box. It also consists of a 2$\times$2$\times$2 grid with seven movable pieces. Each piece has three sides coloured red and three sides coloured blue, so that opposite faces are opposite colours (shown in Figure~\ref{fig:cubie}). There is always one corner of the piece surrounded by red faces and one corner surrounded by blue faces, giving eight distinct orientations of a piece: one for each position of the ``blue'' corner (which also determines the position of the red corner). The seven pieces in the puzzle have distinct orientations. A solved state of the puzzle is a configuration where all the faces towards the outside of the puzzle are one colour, but the three faces in the core, seen through the empty space, are the opposite colour.

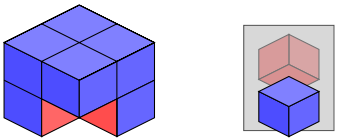
\begin{figure}
    \centering{
    \begin{tikzpicture}[every node/.style={minimum size=1.0cm},on grid]
    \begin{scope}[every node/.append style={yslant=-0.5},yslant=-0.5]
        \draw[fill=blue!70!white] (0,0) rectangle (0.5,1);
        \draw[fill=blue!70!white] (0.5,0.5) rectangle (1,1);
        \draw[fill=red!60!white] (0.5,0) -- (0.5,0.5) -- (1,0.5) -- cycle;
        \draw (0,0.5) -- (0.5,0.5);
    \end{scope}
    \begin{scope}[every node/.append style={yslant=0.5},yslant=0.5]
        \draw[fill=blue!60!white] (1,0) rectangle (2,-0.5);
        \draw[fill=blue!60!white] (1.5,-0.5) rectangle (2,-1);
        \draw[fill=red!70!white] (1,-0.5) -- (1.5,-0.5) -- (1.5,-1) -- cycle;
        \draw (1.5,0) -- (1.5,-1);
    \end{scope}
    \begin{scope}[every node/.append style={yslant=-0.5,xslant=1},yslant=-0.5,xslant=1]
        \draw[step=0.5, fill=blue!50!white] (-1,1) grid (0,2) rectangle (-1,1);
    \end{scope}
    \end{tikzpicture}
    \hspace{1cm}
    \begin{tikzpicture}[scale=0.4, every node/.style={minimum size=1.0cm}]
    \begin{scope}[every node/.append style={yslant=-0.5},yslant=-0.5]
        \filldraw[draw=black,fill=red!50!white,step=0.5] (1,2.2) rectangle (2,3.2);
    \end{scope}
    \begin{scope}[every node/.append style={yslant=0.5},yslant=0.5]
        \filldraw[draw=black,fill=red!60!white,step=0.5] (0,1.2) rectangle (1,2.2);
    \end{scope}
    \begin{scope}[every node/.append    style={yslant=-0.5,xslant=1},yslant=-0.5,xslant=1]
        \filldraw[draw=black,fill=red!70!white,step=0.5] (-1.2,1.2) rectangle (-0.2,2.2);
    \end{scope}
        
    \filldraw[draw=black,fill=lightgray,opacity=0.6] (-0.5,-0.5) rectangle (2.5,3);
        
    \begin{scope}[every node/.append style={yslant=-0.5},yslant=-0.5]
        \filldraw[draw=black,fill=blue!70!white,step=0.5] (0,-0.2) rectangle (1,0.8);
    \end{scope}
    \begin{scope}[every node/.append style={yslant=0.5},yslant=0.5]
        \filldraw[draw=black,fill=blue!60!white,step=0.5] (1,-1.2) rectangle (2,-0.2);
    \end{scope}
    \begin{scope}[every node/.append    style={yslant=-0.5,xslant=1},yslant=-0.5,xslant=1]
        \filldraw[draw=black,fill=blue!50!white,step=0.5] (-0.8,0.8) rectangle (0.2,1.8);
    \end{scope}
    
    \end{tikzpicture}}
    \caption{The Varikon Box. The left shows an example of a solved configuration. The right shows two views of a piece inside the 2$\times$2$\times$2 Varikon Box. It has one corner surrounded by blue faces, and the opposite corner surrounded by red faces.}
    \label{fig:cubie}
\end{figure}

The outline of the paper is as follows. Section \ref{sec:fifteen} is a review of the classical analysis of 15-Puzzle configurations which can be reached by valid moves. Section \ref{sec:varikon} covers which properties of the 15-Puzzle carry over to the 2$\times$2$\times$2 Varikon Box, and describes its reachable configurations. Section \ref{sec:group_action} describes the structure of the group formed by the moves of the 2$\times$2$\times$2 Varikon Box. Section \ref{sec:shortest_word} gives a heuristic for solving the Varikon Box in few moves, by writing permutations as words in terms of a generating set. Section \ref{sec:conclusion} concludes with some open questions on generalizations of these puzzles.

\section{Review of the 15-Puzzle}
\label{sec:fifteen}

The 2$\times$2$\times$2 Varikon Box is closely related with the 15-Puzzle, so we begin by reviewing the structure of the 15-Puzzle. In particular, we are interested in configurations which we can reach using a valid sequence of moves. Let us denote by $C$ the set of such reachable configurations of the 15-Puzzle. For convenience, we denote the solved configuration by $\iota$. A sequence of valid moves permutes the pieces of the puzzle, but it is not the case that every permutation of the pieces is reachable. For example, Figure~\ref{fig:not_reachable} shows a configuration which is well-known not to be in $C$.

\begin{figure}[H]
    \centering{
        \fifteenpuzzle{
        1 & 2 & 3 & 4 \\
        5 & 6 & 7 & 8 \\
        9 & 10 & 11 & 12 \\
        13 & 15 & 14 &
        }
    }
    \caption{An unsolvable configuration of the 15-Puzzle. There does not exist a sequence of moves that maps this configuration to the solved state.}
    \label{fig:not_reachable}
\end{figure}

Now take the subset $C_{fix}$ of reachable configurations where the empty space is fixed in the bottom right corner. Configurations in $C_{fix}$ can also be thought of as permutations in $S_{15}$, with respect to $\iota$. For example, the configuration in Figure~\ref{fig:not_reachable} corresponds with the permutation $\cycle[\;]{14,15}$, since performing this permutation on the pieces of $\iota$ would yield the configuration in the figure.  The following lemma was first shown in~\cite{Johnson}.

\begin{lemma}\label{lem:fifteen_fix_even}
For every $c \in C_{fix}$, $c$ must correspond with an even permutation.
\end{lemma}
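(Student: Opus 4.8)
The plan is to track the parity of the underlying permutation as a function of the moves performed, exploiting the fact that the empty space must return to its starting corner. The key observation is that a single valid move—sliding a piece into the adjacent empty space—simultaneously moves the empty space by one grid cell and transposes the empty space's label with that of the piece that slid. Treating the empty space as a sixteenth token occupying a square, each move is exactly a transposition of two adjacent tokens, and hence flips the parity of the permutation of all sixteen tokens.

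First I would set up a coordinate colouring: colour the sixteen cells of the $4 \times 4$ grid like a checkerboard, so that each move, which shifts the empty space to an orthogonally adjacent cell, changes the colour of the cell occupied by the empty space. Since every move changes this colour, after a sequence of $n$ moves the empty space has changed colour exactly $n$ times. For a configuration in $C_{fix}$ the empty space begins and ends in the bottom-right corner, so it occupies the same colour at the start and the end; this forces $n$ to be even. That is the main structural constraint, and establishing it cleanly is the crux of the argument—everything else is bookkeeping about how parity accumulates.

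Next I would translate this into permutation parity. Let $\sigma_n \in S_{16}$ denote the permutation of all sixteen tokens (the fifteen pieces together with the blank) after $n$ moves, relative to $\iota$. Because each move is a transposition of the blank with an adjacent piece, each move multiplies $\sigma$ by a transposition, so $\sigma_n$ is even if and only if $n$ is even. Combining this with the colouring argument, any configuration reached with the blank back in its home corner has $\sigma_n$ even. Finally, since the blank is fixed in that configuration, $\sigma_n$ fixes the blank token and therefore restricts to a permutation $c \in S_{15}$ of the numbered pieces with the same sign as $\sigma_n$; hence $c$ is even, which is exactly the claim for $c \in C_{fix}$.

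I expect the main obstacle to be stating the checkerboard/parity coupling rigorously rather than hand-wavingly: one must argue carefully that \emph{every} legal move, regardless of direction, both flips the checkerboard colour of the blank and multiplies the token permutation by a single transposition, so that these two parities stay locked together throughout any move sequence. Once that lemma is pinned down, the conclusion that returning the blank to its corner requires an even number of moves, and hence yields an even permutation, follows immediately.
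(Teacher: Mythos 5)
Your proof is correct and follows essentially the same route as the paper: a checkerboard colouring of the $4\times 4$ grid, the observation that every move both flips the blank's colour and contributes one transposition, and the conclusion that returning the blank to its corner forces an even number of moves and hence an even permutation. Your write-up is somewhat more explicit than the paper's (notably the step passing from the parity of the $S_{16}$ token permutation to the parity of the induced permutation in $S_{15}$), but the underlying argument is identical.
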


\begin{proof}
This can be seen by imagining the 4x4 grid as a black and white checkerboard. When considering moves that swap the empty space with an adjacent square, each move must change the colour that the empty space is on. If we take two configurations $c_1, c_2 \in C_{fix}$, it must take an even number of transpositions to transition from $c_1$ to $c_2$, since the empty space begins and ends on the same colour.

\end{proof}

To show that every even permutation is a reachable configuration, we adapt the proof from~\cite{Mulholland}. First, we introduce a notation for moves, which act as maps on the configurations. The definition for moves is based on the idea of sliding blocks to the right or left, as well as up or down. Unfortunately, not all moves are possible on all configurations. If the empty space is on the far left side of the grid, there is no piece to the right which can be moved to fill the space. To fix this issue, we define a ``right'' move, denoted $R$, to either mean sliding a block to the right to fill the space, or if the space is on the far left, it means to slide the entire row to the left. Figure~\ref{fig:r_move_order} shows that under this definition, $R^4$ is equivalent to the identity map on $C$, and $R^3$ is what we might consider a ``left'' move. Similarly, we can define $U$ to be the ``up'' move, which slides a piece up, effectively moving the empty space down. Using this notation, moves can be written as a sequence of $R$'s and $U$'s.

\begin{figure}
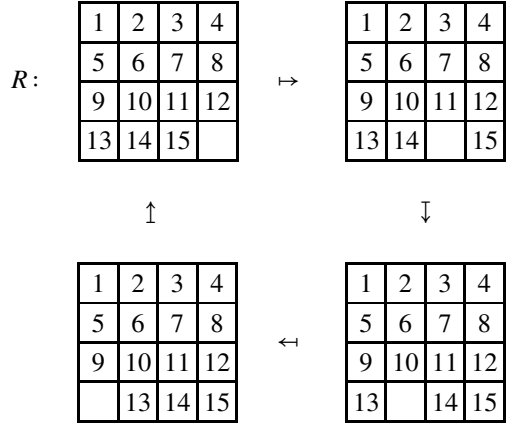

    \centering
    $R:$\fifteenpuzzle{
        1 & 2 & 3 & 4 \\
        5 & 6 & 7 & 8 \\
        9 & 10 & 11 & 12 \\
        13 & 14 & 15 & 
    }
    $\mapsto$
    \fifteenpuzzle{
        1 & 2 & 3 & 4 \\
        5 & 6 & 7 & 8 \\
        9 & 10 & 11 & 12 \\
        13 & 14 & & 15
    }\\
    \vspace{0.5cm}
    $\quad\upmapsto\hspace{3.5cm}\downmapsto$\\
    \vspace{0.5cm}
    \quad\fifteenpuzzle{
        1 & 2 & 3 & 4 \\
        5 & 6 & 7 & 8 \\
        9 & 10 & 11 & 12 \\
         & 13 & 14 & 15
    }
    $\leftmapsto$
    \fifteenpuzzle{
        1 & 2 & 3 & 4 \\
        5 & 6 & 7 & 8 \\
        9 & 10 & 11 & 12 \\
        13 & & 14 & 15
    }
    \caption{Applying the ``right'' move repeatedly cycles through four 15-Puzzle configurations.}
    \label{fig:r_move_order}
\end{figure}

The diagram on the left in Figure~\ref{fig:fifteen_moves} is in $C_{fix}$, as it can be obtained by applying $R U^3 R^3 U$ to $\iota$, which is indicated by multiplication. Using the convention that moves are applied from left to right, this sequence corresponds with the permutation $\cycle[\;]{11,12,15}$. In order to show that every even permutation is reachable, we will use the fact that $A_{15}$ is generated by the $3$-cycles, $\cycle[\;]{11,12,i}$ for all $i \in \{1, \dots, 15\}$ other than $11$ and $12$.

\begin{figure}[H]
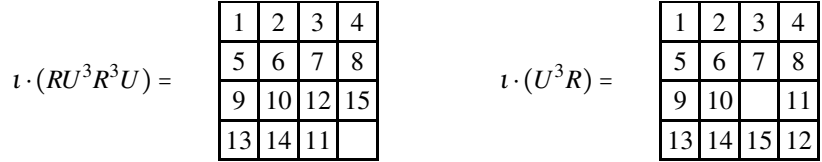

    \centering
    $\iota \cdot (R U^3 R^3 U) = $ \fifteenpuzzle{
        1 & 2 & 3 & 4 \\
        5 & 6 & 7 & 8 \\
        9 & 10 & 12 & 15 \\
        13 & 14 & 11 & 
    }
    \hspace{1cm}
    $\iota \cdot (U^3 R) = $ \fifteenpuzzle{
        1 & 2 & 3 & 4 \\
        5 & 6 & 7 & 8 \\
        9 & 10 &  & 11 \\
        13 & 14 & 15 & 12
    }
    \caption{Examples of sequences being applied to the solved configuration of the 15-Puzzle. The left shows a $3$-cycle in the bottom-right quadrant, and the right shows the set-up sequence, $U^3 R$, being applied to $\iota$.}
    \label{fig:fifteen_moves}
\end{figure}

We start by performing a set-up sequence, $U^3 R$, to the solved configuration, which we will undo later (Figure~\ref{fig:fifteen_moves}). Following the set-up, we can then swap the empty space with the following sequence of pieces: $7 \to 8 \to 4 \to 3 \to 2 \to 1 \to 5 \to 6 \to 10 \to 9 \to 13 \to 14 \to 15 \to 7$. Repetitions of this cycle will replace $15$ in the bottom-right quadrant with any other piece in this sequence, while fixing $11$ and $12$. Altogether, this is written:
\begin{equation}
    \sigma_n = (U^3 R) (U^3 R^3 U^3 R^3 U R^3 U R U R^2 U^3)^n (U^3 R)^{-1},
\end{equation}
where $n \geq 0$. Therefore, over all distinct choices of $n$, the sequence:
\begin{equation}
    \sigma_n (R U^3 R^3 U) \sigma_n^{-1},
\end{equation}
will correspond with permutations of the form $\cycle[\;]{11,12,i}$, for all $i$ except $i=11,12$. By the above arguments, we have the following theorem.

\begin{theorem}\label{thm:fifteen}
$C_{fix}$ corresponds precisely with even permutations of the fifteen pieces.
\end{theorem}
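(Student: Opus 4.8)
The plan is to prove the two inclusions separately: every configuration in $C_{fix}$ corresponds to an even permutation, and conversely every even permutation is realized by some configuration in $C_{fix}$. The first direction is exactly Lemma~\ref{lem:fifteen_fix_even}, which is already established by the checkerboard parity argument, so that half requires no further work. The substance of the theorem is the reverse inclusion, namely that $A_{15}$ (the even permutations of the fifteen pieces) is contained in the set of realizable permutations with the empty space fixed in the bottom-right corner.

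For the reverse inclusion, I would proceed as follows. First I would observe that the set of permutations realizable by move-sequences in $C_{fix}$ forms a \emph{subgroup} of $S_{15}$: composition of two such sequences (concatenation of words in $R$ and $U$) again returns the empty space to the bottom-right corner and again realizes a permutation, and inverses exist since each generator $R, U$ has finite order (indeed $R^4 = U^4 = \iota$ on $C$, so $R^{-1} = R^3$ and $U^{-1} = U^3$), giving a well-defined inverse word. Call this subgroup $G \le S_{15}$. By Lemma~\ref{lem:fifteen_fix_even} we already know $G \le A_{15}$, so it suffices to show $A_{15} \le G$. Next I would invoke the stated group-theoretic fact that $A_{15}$ is generated by the $3$-cycles $\cycle[\;]{11,12,i}$ for $i \in \{1,\dots,15\}\setminus\{11,12\}$; hence it is enough to exhibit each such $3$-cycle as an element of $G$.

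The heart of the argument is the explicit construction already laid out before the theorem statement. I would make precise that the base commutator $(R U^3 R^3 U)$ realizes the $3$-cycle $\cycle[\;]{11,12,15}$ while returning the empty space to its corner, and that the conjugating words $\sigma_n$ (the set-up $U^3 R$, followed by $n$ repetitions of the long cycling sequence, followed by the inverse set-up) are each elements of $G$ whose net effect is to carry piece $15$ to the position of the $i$-th piece along the listed orbit $7 \to 8 \to 4 \to \cdots \to 15 \to 7$. Conjugation then yields $\sigma_n \, \cycle[\;]{11,12,15} \, \sigma_n^{-1} = \cycle[\;]{11,12,i}$, with the key point that $\sigma_n$ fixes positions $11$ and $12$ so these two entries of the $3$-cycle are undisturbed and only the third entry is relabelled. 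Ranging over $n \ge 0$ produces every required $i$, so each generator of $A_{15}$ lies in $G$, giving $A_{15} \le G$ and hence $G = A_{15}$.

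The main obstacle is the bookkeeping in the conjugation step: one must verify carefully that $\sigma_n$ genuinely fixes pieces $11$ and $12$ and sends $15$ to the desired position, and that conjugating a $3$-cycle by $\sigma_n$ behaves as the standard relabelling $\sigma \tau \sigma^{-1}$ acting by renaming the moved points. This amounts to tracking the orbit of the empty space through the long move-word and confirming the enumerated thirteen-element cycle of positions is correct; I would treat this as a finite verification (in principle checkable by direct computation of the permutation each word induces) rather than reproduce it move-by-move. Everything else is the routine combination of the parity bound from Lemma~\ref{lem:fifteen_fix_even} with the generation of $A_{15}$ by these $3$-cycles.
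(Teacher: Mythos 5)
Your proposal is correct and follows essentially the same route as the paper: the forward inclusion is exactly Lemma~\ref{lem:fifteen_fix_even}, and the reverse inclusion uses the paper's own construction — the base word $RU^3R^3U$ realizing $(11\,12\,15)$, conjugated by the words $\sigma_n$ (set-up $U^3R$, repeated cycling, inverse set-up) to produce every generator $(11\,12\,i)$ of $A_{15}$. Your explicit packaging of the realizable permutations as a subgroup of $S_{15}$ is a harmless formalization of what the paper leaves implicit.
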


A similar argument applies to any fixed position of the empty space. Therefore, the number of reachable configurations is $16 \cdot |A_{15}| = \frac{16!}{2}$. This is different than saying the reachable configurations are even permutations of the $16$ squares. Rather, when the empty square is an even (or respectively, odd) number of swaps away from the bottom-right corner, then the configuration is reachable if and only if the permutation is even (respectively odd).

\section{Reachable Configurations of the Varikon Box}
\label{sec:varikon}

In this section, we describe some previously known results for the 2$\times$2$\times$2 Varikon Box~\cite{Scherphuis}, and provide proofs for these facts, by generalizing the 15-Puzzle. First, we note that the 2$\times$2$\times$2 Varikon Box is precisely a three-dimensional variant of the 15-Puzzle. If we choose to try solving the puzzle by making all the outer faces blue, there would be only one candidate solution, which consists of matching the blue corner of each piece with the respective corner of the puzzle. Therefore, at first glance, there would appear to be two solutions: whether we choose to put blue or red on the outer faces.

\begin{lemma}\label{lem:one_soln}
Given a fixed starting configuration, the 2$\times$2$\times$2 Varikon Box has exactly one solution.
\end{lemma}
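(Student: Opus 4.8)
My plan is to reduce the statement to a parity argument in the spirit of Lemma~\ref{lem:fifteen_fix_even}, exploiting the fact that sliding moves permute the pieces among the eight corner positions but never rotate them, so each piece's colouring---equivalently, the direction in which its blue corner points---is invariant under the motion. First I would pin down the candidate solved states. Since the seven pieces realise seven of the eight possible orientations, exactly one orientation is absent; call the missing blue-corner direction $d_0$. For the all-blue solution every occupied corner $c$ must hold the piece whose blue corner points along $c$, which is possible precisely when $c \neq d_0$; this forces the hole to $d_0$ and determines the whole arrangement uniquely. Symmetrically, the all-red solution forces each piece's blue corner to point inward, placing the piece with blue direction $b$ at the antipodal corner $-b$ and forcing the hole to $-d_0$. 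Thus there are exactly two candidate solutions $B$ and $R$, and they are related by the antipodal map of the cube: the same seven pieces occupy antipodal corners in the two states, and the hole sits at the antipodal corners $d_0$ and $-d_0$.

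The heart of the argument is then to show that $B$ and $R$ cannot both be reached from a common start, i.e.\ that $R$ is not reachable from $B$. I would colour the eight corners like a three-dimensional checkerboard, by the parity of the coordinate sum; this is the bipartition of the cube graph, and---exactly as in Lemma~\ref{lem:fifteen_fix_even}---every move swaps the hole with an adjacent corner, hence flips the hole's colour and acts as a single transposition on the eight cell-contents. Because the moves are invertible, reachability is symmetric, so any motion carrying $B$ to $R$ would move the hole from $d_0$ to the oppositely-coloured antipode $-d_0$, necessarily using an \emph{odd} number of transpositions and therefore realising an \emph{odd} permutation of the contents. But the net permutation taking the $B$-arrangement to the $R$-arrangement is the antipodal map $c \mapsto -c$, which pairs up the four antipodal pairs of corners and is thus a product of four transpositions, an \emph{even} permutation. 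This contradiction shows that at most one of $B, R$ is reachable from any configuration.

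To upgrade ``at most one'' to ``exactly one'' I would argue that every configuration reaches at least one solved state. This is the three-dimensional analogue of Theorem~\ref{thm:fifteen}: the parity invariant just described splits the $8!$ configurations into only two classes, and one must check that moves act transitively within each class (equivalently, that the puzzle group on the bipartite cube graph, with the hole returned home, is the full alternating group $A_7$). Granting this, the two classes are exactly the orbits of $B$ and of $R$; since these orbits are distinct, every configuration lies in precisely one of them and hence can be solved in exactly one way.

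I expect this last, transitivity step to be the main obstacle. The ``at most one'' direction is a clean parity computation, but showing that every even arrangement is actually attainable requires exhibiting generating $3$-cycles of pieces that return the hole home, paralleling the explicit set-up and conjugation sequences used for the $15$-Puzzle. I would either adapt those sequences directly to the cube's move set or invoke the general solvability theorem for sliding puzzles on $2$-connected, non-cyclic graphs.
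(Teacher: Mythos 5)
Your proposal is correct and takes essentially the same approach as the paper: both rest on the parity observation that the two candidate solutions are related by the antipodal map (an even permutation of the cell contents), while any move sequence carrying the hole to the antipodal corner of the bipartite cube graph uses an odd number of transpositions, so the two candidates cannot both be reached. The transitivity step you flag as the remaining obstacle for the ``exactly one'' half is precisely what the paper establishes immediately after the lemma, where the sequence $RBRB$ and its conjugates give $3$-cycles $(5\,6\,i)$ generating $A_7$, so your sketch aligns with the paper's development there as well.
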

\begin{proof}
On each individual piece, the red corner and blue corner must be opposite from each other. Transitioning between candidate solutions would mean swapping every piece with the contents of the opposite corner, which is an even permutation. However, any sequence of moves that takes the empty space to the opposite corner will involve an odd number of swaps. Therefore, it is not possible to transition between the two candidate solutions, making only one possible to reach.
\end{proof}

According to Lemma \ref{lem:one_soln}, we can numerically label the pieces and define a solved configuration in terms of the labelling. Let us denote the set of reachable configurations of the 2$\times$2$\times$2 Varikon Box by $V$. We will reuse $\iota$ to indicate the solved configuration, shown in Figure~\ref{fig:varikon_solved}.

\begin{figure}[H]
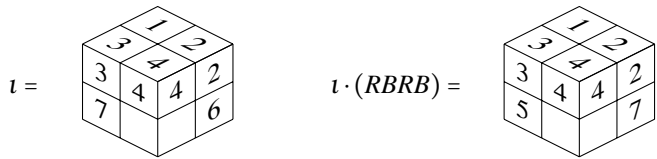

    \centering
    $\iota = $\varikon{1}{2}{3}{4}{5}{6}{7}{}
    \qquad
    $\iota \cdot (RBRB) = \varikon{1}{2}{3}{4}{6}{7}{5}{}$
    \caption{Left: by labelling the pieces of the 2$\times$2$\times$2 Varikon Box, we can define the labelling that represents the solved configuration, denoted by $\iota$. Right: an example of a 3-cycle on the bottom half of the 2$\times$2$\times$2 Varikon Box.}
    \label{fig:varikon_solved}
\end{figure}

Lemma \ref{lem:fifteen_fix_even} extends to the three-dimensional case. Let $V_{fix}$ be the set of configurations where the empty space is in its solved position. If $v \in V_{fix}$, it must correspond with an even permutation of the $7$ pieces: a fact which we already used in the proof of Lemma \ref{lem:one_soln}. To prove that every even permutation is in $V_{fix}$, we show that every cycle of the form $\cycle[\;]{5,6,i}$ is in $V_{fix}$, when $i \neq 5,6$, as this will generate $A_7$.

To describe sequences of swaps on the Varikon Box, we need three generators: $R,U,B$ (right, up, and back, respectively), which act as according to Figure~\ref{fig:varikon_moves}. We define $R^2, U^2,$ and $B^2$ to be the identity map to fix the issue of certain moves being impossible given the position of the empty space. To get the permutation $\cycle[\;]{5,6,7}$, one can perform sequence $RBRB$ (Figure~\ref{fig:varikon_solved}), but we can also replace the $7$ with any of the other pieces, by swapping the empty space with $4 \to 2 \to 1 \to 3 \to 7 \to 4$. By repeating the cycle, we can replace $7$ with any other piece, in order to perform $\cycle[\;]{5,6,i}$ for other $i$.

\begin{figure}[H]
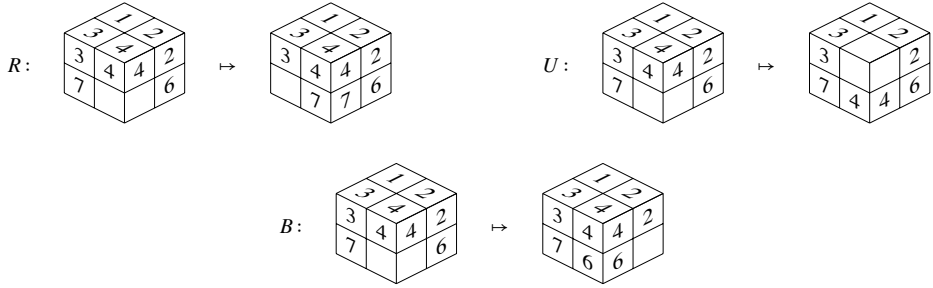

    \centering
    \scalebox{0.8}{$R: \varikon{1}{2}{3}{4}{5}{6}{7}{} \mapsto \varikon{1}{2}{3}{4}{5}{6}{}{7}$}\hfill
    \scalebox{0.8}{$U: \varikon{1}{2}{3}{4}{5}{6}{7}{} \mapsto \varikon{1}{2}{3}{}{5}{6}{7}{4}$}
    \scalebox{0.8}{
    $B: \varikon{1}{2}{3}{4}{5}{6}{7}{} \mapsto \varikon{1}{2}{3}{4}{5}{}{7}{6}$}
    \caption{The three possible directions of movement for the 2$\times$2$\times$2 Varikon Box.}
    \label{fig:varikon_moves}
\end{figure}

Therefore, we can extend Theorem \ref{thm:fifteen} to the 2$\times$2$\times$2 Varikon Box, since configurations in $V_{fix}$ correspond with even permutations of the $7$ pieces. By symmetry, we can conclude that there are $\frac{8!}{2} = 20,160$ reachable configurations, since for every position of the empty space, we can perform any even permutation on the $7$ pieces.

\section{Group Structure of the 2$\times$2$\times$2 Varikon Box}
\label{sec:group_action}

We begin this section by observing that sequences of $R, U,$ and $B$ give rise to a group-like structure.

\begin{proposition}\label{prop:var_group}
Let $s_1,s_2$ be two sequences of $R, U,$ and $B$. We say $s_1 = s_2$ if for all $c_1, c_2 \in V$, $s_1: c_1 \mapsto c_2$ if and only if $s_2: c_1 \mapsto c_2$. Then with respect to composition, the set of sequences form a group and the mapping on the configurations is equivalent to a group action.
\end{proposition}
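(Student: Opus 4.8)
The plan is to realise every sequence of moves as an element of the symmetric group $\mathrm{Sym}(V)$ on the (finite) set of reachable configurations, and then to show that the collection of all such elements is a subgroup. The relation in the statement, "$s_1=s_2$ iff $s_1$ and $s_2$ induce the same behaviour on every $c\in V$,'' is precisely the kernel of the assignment sending a sequence to its induced map. So once we know that each sequence induces a genuine bijection of $V$, the quotient by this relation is automatically identified with a subset of $\mathrm{Sym}(V)$, and checking the group axioms there is routine. Everything therefore rests on first turning the loose phrase "$s\colon c_1\mapsto c_2$'' into an honest total, single-valued function, and I expect this to be the main obstacle.

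The key step is thus to verify that each generator $R,U,B$ is a total function $V\to V$, and in fact a bijection. This is where the convention $R^2=U^2=B^2=\mathrm{id}$ does its work. Along each of the three axes the $2\times2\times2$ box has only two cells, so in any configuration the empty space has exactly one neighbour in the $R$-direction (and similarly for $U$ and $B$). Hence $R$ is defined on every configuration — it simply transposes the empty space with that unique neighbour — and since $R$ is a valid move it maps a reachable configuration to a reachable one, so $R\colon V\to V$ is well defined and single-valued. Applying $R$ a second time swaps the same two cells back, which is exactly the content of $R^2=\mathrm{id}$; in particular $R$ is its own inverse and therefore a bijection. The same argument applies to $U$ and $B$.

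Writing $\rho_g\colon V\to V$ for the bijection induced by a generator $g$, a sequence $s=g_1g_2\cdots g_k$ induces the map $\rho_s$ defined by $c\mapsto c\cdot s$, which by the left-to-right convention is the composite of the $\rho_{g_i}$ and hence again a bijection in $\mathrm{Sym}(V)$. Concatenation of sequences corresponds to composition of these maps, so $\rho_s$ depends on $s$ multiplicatively; and two sequences satisfy $s_1=s_2$ in the sense of the statement exactly when $\rho_{s_1}=\rho_{s_2}$. Consequently composition descends to equivalence classes without ambiguity, and the set of classes is in operation-preserving bijection with the image $G=\{\rho_s\}\subseteq\mathrm{Sym}(V)$.

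It then remains to check that $G$ is a subgroup. Closure holds because the concatenation of two sequences is again a sequence, whose induced map is the composite of the two given maps; the empty sequence (equivalently $R^2$) gives the identity of $\mathrm{Sym}(V)$; and for $s=g_1\cdots g_k$ the reversed sequence $g_k\cdots g_1$ induces $\rho_s^{-1}$, since each $\rho_{g_i}$ is an involution. Associativity is inherited from $\mathrm{Sym}(V)$, so the sequences form a group. That the induced map is a group action is then immediate: the identity acts trivially by construction, and the compatibility $(c\cdot s)\cdot t=c\cdot(st)$ is exactly the statement that concatenation corresponds to composition, read off at the level of configurations; well-definedness on classes is guaranteed because the relation was defined through this very action. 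As noted, the only step requiring genuine care is the first — excluding the possibility that any move is undefined or multivalued — because every later step presupposes that each sequence is an honest bijection of $V$.
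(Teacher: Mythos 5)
Your proposal is correct, and at its core it follows the same line as the paper: both arguments rest on the observations that each generator $R,U,B$ is an involution (giving inverses), that concatenation gives closure, and that the empty sequence is the identity, with the action axioms then holding by construction. Where you genuinely diverge is in the formalization. The paper works directly with equivalence classes of sequences and must verify by hand that multiplication is well defined on classes (if $x=y$ and $s$ is any sequence, then $xs=ys$); you instead realize each sequence as an element of $\mathrm{Sym}(V)$ and observe that the stated equivalence relation is exactly the kernel of the evaluation map $s\mapsto\rho_s$, so the classes are identified with a subset of $\mathrm{Sym}(V)$ and well-definedness, associativity, and invertibility are inherited from the symmetric group for free. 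You also make explicit a point the paper leaves implicit: that each generator is a total, single-valued map on $V$, because the empty space has a unique neighbour along each axis of the $2\times2\times2$ box --- this is precisely what the paper's convention $R^2=U^2=B^2=\mathrm{id}$ encodes, and you are right that it is the one step on which everything else depends. In short, both proofs are complete; yours trades the paper's brevity for a framework in which the routine verifications become automatic, which is arguably the cleaner way to handle the quotient.
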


\begin{proof}
Composition is associative and concatenating two sequences will produce another valid sequence. The empty sequence satisfies the properties of the identity. Every sequence is invertible, since each element of the generating set $\{R, U, B\}$ is an involution. The group operation is well-defined, since if $x$ and $y$ are sequences where $x=y$ and $s$ another sequence, then $xs = ys$, since for all $c \in V$, $x$ and $y$ map $c$ to the same configuration, and performing additional moves will maintain equality. The mapping on configurations is clearly a group action, since the empty sequence leaves all configurations untouched, and the group multiplication is defined to be compatible with the action.
\end{proof}

We now investigate the structure of this group, which we call $G$, and we show how to reduce it to a structure that will help us solve the 2$\times$2$\times$2 Varikon Box. First, note that the stabilizer of $\iota$ is trivial and the action of the group is transitive, which implies that $\vert G \vert = 20,160$.

Interestingly, if we restrict $G$ to the subgroup of sequences involving only $R$ and $U$, we get a copy of $D_6$, since $R^2 = e, (RU)^6 = e$, and $RU \cdot R = R \cdot (RU)^{-1}$. For any given configuration, this gives a local picture around the configuration, since by alternating any two of $R,U,$ and $B$, we obtain a copy of $D_6$, pictured in Figure~\ref{fig:varikon_cycles}.

\begin{figure}[H]
    \centering
    \begin{tikzpicture}[shorten >=1pt,node distance=2cm,on grid,auto,
        state/.style={circle, draw, minimum size=0.5cm}]
    \node[state, initial where=above] (center) {}; 
    \node[above left = 1.3 and 2.3 of center] (RU) {};
    \node[above right = 1.3 and 2.3 of center] (RB) {};
    \node[below = 2.4 of center] (UB) {};
    
    \foreach \i [count=\ni] in {90,210,-30}
        \node[state] (\ni) [above right = {sin(\i)} and {cos(\i)} of center]  {};
    
    \foreach \i [count=\ni] in {30,60,...,270}
        \node[state] (RU\ni) [above right = {2*sin(\i)} and {2*(cos(\i)} of RU]  {};
    
    \foreach \i [count=\ni] in {150,120,...,-90}
        \node[state] (RB\ni) [above right = {2*sin(\i)} and {2*(cos(\i)} of RB]  {};
    
    \foreach \i [count=\ni] in {150,180,...,390}
        \node[state] (UB\ni) [above right = {2*sin(\i)} and {2*(cos(\i)} of UB]  {};
    \path[->]
        (center) edge node {R} (1)
        (center) edge node {U} (2)
        (center) edge node {B} (3)
        (1) edge node {R} (center)
        (2) edge node {U} (center)
        (3) edge node {B} (center)
        
        (1) edge node {U} (RU1)
        (RU1) edge node {R} (RU2)
        (RU2) edge node {U} (RU3)
        (RU3) edge node {R} (RU4)
        (RU4) edge node {U} (RU5)
        (RU5) edge node {R} (RU6)
        (RU6) edge node {U} (RU7)
        (RU7) edge node {R} (RU8)
        (RU8) edge node {U} (RU9)
        (RU9) edge node {R} (2)
        
        (RU1) edge node {} (1)
        (RU2) edge node {} (RU1)
        (RU3) edge node {} (RU2)
        (RU4) edge node {} (RU3)
        (RU5) edge node {} (RU4)
        (RU6) edge node {} (RU5)
        (RU7) edge node {} (RU6)
        (RU8) edge node {} (RU7)
        (RU9) edge node {} (RU8)
        (2) edge node {} (RU9)
        
        (1) edge node {} (RB1)
        (RB1) edge node {} (RB2)
        (RB2) edge node {} (RB3)
        (RB3) edge node {} (RB4)
        (RB4) edge node {} (RB5)
        (RB5) edge node {} (RB6)
        (RB6) edge node {} (RB7)
        (RB7) edge node {} (RB8)
        (RB8) edge node {} (RB9)
        (RB9) edge node {} (3)
        
        (RB1) edge node {B} (1)
        (RB2) edge node {R} (RB1)
        (RB3) edge node {B} (RB2)
        (RB4) edge node {R} (RB3)
        (RB5) edge node {B} (RB4)
        (RB6) edge node {R} (RB5)
        (RB7) edge node {B} (RB6)
        (RB8) edge node {R} (RB7)
        (RB9) edge node {B} (RB8)
        (3) edge node {R} (RB9)
        
        (2) edge node {B} (UB1)
        (UB1) edge node {U} (UB2)
        (UB2) edge node {B} (UB3)
        (UB3) edge node {U} (UB4)
        (UB4) edge node {B} (UB5)
        (UB5) edge node {U} (UB6)
        (UB6) edge node {B} (UB7)
        (UB7) edge node {U} (UB8)
        (UB8) edge node {B} (UB9)
        (UB9) edge node {U} (3)
        
        (UB1) edge node {} (2)
        (UB2) edge node {} (UB1)
        (UB3) edge node {} (UB2)
        (UB4) edge node {} (UB3)
        (UB5) edge node {} (UB4)
        (UB6) edge node {} (UB5)
        (UB7) edge node {} (UB6)
        (UB8) edge node {} (UB7)
        (UB9) edge node {} (UB8)
        (3) edge node {} (UB9);
    \end{tikzpicture}
    \caption{Centered at a particular configuration, if one alternates between $R$ and $U$, between $R$ and $B$, or between $U$ and $B$, we get three copies of the dihedral group of order $12$.}
    \label{fig:varikon_cycles}
\end{figure}
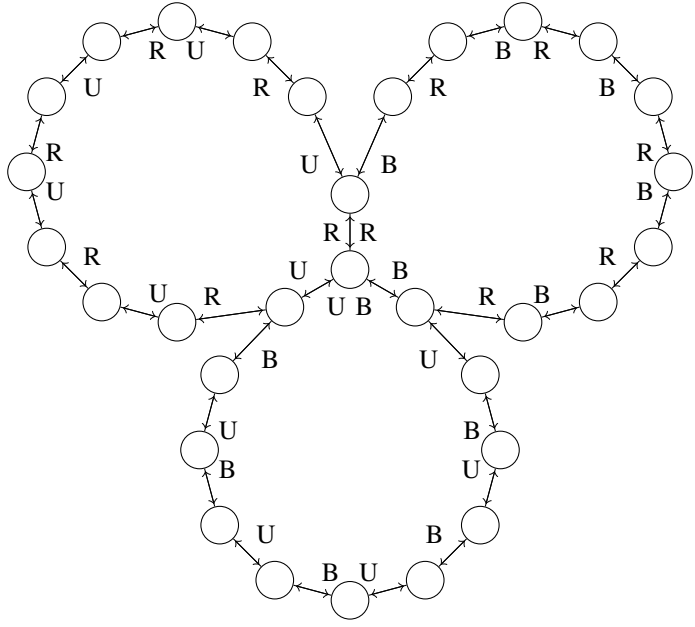

To help break down the size of the group, 
consider the group homomorphism, $\varphi:G \to (\mathbb{Z}_2)^3$, where for $g \in G$ the components of $\varphi(g)$ correspond with the counts modulo $2$ of $R$'s, $U$'s, and $B$'s in $g$ respectively. For example:
\begin{equation}
    \varphi(RUBUBR) = (0,0,0),
\end{equation}
which also implies that $RUBUBR$ fixes the empty space. It is clear to see that this is a well-defined group homomorphism by properties of modular arithmetic since each letter toggles the position of the empty space in a different dimension. Consider $K = \ker\varphi$, which is a normal subgroup. By our definition of $\varphi$, $K$ must correspond with sequences of moves which fix the empty space. By the extension of Lemma \ref{lem:fifteen_fix_even}, $K \cong A_7$, as it acts like $A_7$ on the configurations in $V_{fix}$. Given that $K$ is normal in $G$, the product, $K\langle R \rangle$, is a subgroup of $G$, and since $R \notin K$, we get that $\vert K\langle R \rangle \vert = 5,040$. This subgroup will be a key piece of the decomposition of $G$.

On the other hand, consider $Z$, the center of $G$. Computationally, we verified that $\vert Z \vert = 4$.\footnote{The nontrivial elements of $Z$ can be given by the sequences: $(RU)^2(RB)^2UB(RB)^2UBRB$,
$(RU)^2RB(RU)^2(BU)^2RURB$, and 
$(RU)^2BUBR(BU)^2BR(BU)^2$.}
The configurations produced by applying these elements to $\iota$ are shown in Figure~\ref{fig:center}. One can easily verify by inspection that the intersection of $K\langle R \rangle$ and $Z$ is trivial, as no element in $K \langle R \rangle$ will move the empty space far enough to reach the non-trivial configurations in Figure~\ref{fig:center}. Furthermore,
\begin{equation}
    \vert K\langle R \rangle Z \vert = \frac{\vert K \langle R \rangle \vert \cdot \vert Z \vert}{\vert K \langle R \rangle \cap Z \vert } = \frac{5040 \cdot 4}{1} = 20160 = \vert G \vert.
\end{equation}
Since $K\langle R \rangle Z \leqslant G$, then $K\langle R \rangle Z = G$. Therefore, since $K\langle R \rangle \cap Z$ is trivial and $Z$ commutes with $K\langle R \rangle$, we have that $G \cong K \langle R \rangle \times Z$. By determining the structure of these components, we will then obtain the full structure of $G$.

\begin{figure}[H]
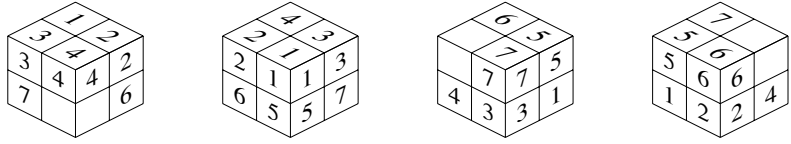

    \centering
    \scalebox{0.9}{
        \varikon{1}{2}{3}{4}{5}{6}{7}{}
        \varikon{4}{3}{2}{1}{}{7}{6}{5}
        \varikon{6}{5}{}{7}{2}{1}{4}{3}
        \varikon{7}{}{5}{6}{3}{4}{1}{2}
    }
    \caption{The configurations obtained by applying elements of the center, $Z$, to $\iota$. They correspond with $\iota$ itself, and $180^{\circ}$ rotations of the entire box, pivoting around the $U$, $B$, and $R$ axes.}
    \label{fig:center}
\end{figure}

\begin{lemma}\label{lem:K}
$K\langle R \rangle \cong S_7$, where $K$ is the kernel of the group homomorphism $\varphi$.
\end{lemma}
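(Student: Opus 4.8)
The plan is to identify $H := K\langle R\rangle$ as an index-$2$ extension of $K \cong A_7$ and then pin down its isomorphism type using the automorphism group of $A_7$ together with a center computation. Since $K$ is normal in $G$ it is normal in $H$, and because $R \notin K$ while $|H| = 5040 = 2\,|A_7|$ (as recorded above), we have $[H:K]=2$. Conjugation then gives a homomorphism $\alpha : H \to \mathrm{Aut}(K)$, and I would invoke the classical facts that $Z(A_7)$ is trivial and $\mathrm{Aut}(A_7) \cong S_7$, noting $|S_7| = 5040$.

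First I would bound the kernel of $\alpha$, namely the centralizer $C_H(K)$. Because $Z(K) = 1$, we have $K \cap C_H(K) = Z(K) = 1$, so $C_H(K)$ injects into $H/K \cong \mathbb{Z}_2$ and hence $|C_H(K)| \le 2$. The two possibilities correspond exactly to the two candidate groups: $|C_H(K)| = 2$ forces a central involution and $H \cong A_7 \times \mathbb{Z}_2$, whereas $|C_H(K)| = 1$ makes $\alpha$ injective, so that $H$ embeds into $\mathrm{Aut}(A_7) \cong S_7$ and, by the order count $|H| = |S_7|$, gives $H \cong S_7$. Thus everything reduces to excluding the first case, i.e. to showing that $H$ has trivial center.

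The cleanest way to compute $Z(H)$ is to reuse the global decomposition already established, $G \cong K\langle R\rangle \times Z$, where $Z = Z(G)$ and $|Z| = 4$. Since the center of a direct product is the product of the centers and $Z$ is abelian, $Z(G) = Z(H) \times Z(Z) = Z(H) \times Z$; comparing orders against $|Z(G)| = |Z| = 4$ forces $|Z(H)| = 1$. With $Z(H)$ trivial, a generator of $C_H(K)$, if it had order $2$, would centralize both $K$ and itself and therefore lie in $Z(H)$, a contradiction; so $C_H(K) = 1$ and the injection $\alpha$ yields $H \cong S_7$.

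The main obstacle is exactly this center computation, since $Z(H) = 1$ is what distinguishes $S_7$ from the rival group $A_7 \times \mathbb{Z}_2$ (whose center is $\mathbb{Z}_2$). I expect to get it essentially for free from the already-proved product decomposition and the computed value $|Z(G)| = 4$; equivalently, one can phrase the contradiction as: were $H \cong A_7 \times \mathbb{Z}_2$, then $Z(G) \cong \mathbb{Z}_2 \times Z$ would have order $8 \neq 4$. If one preferred a self-contained verification avoiding reliance on the prior global structure, the alternative is to exhibit explicitly that conjugation by $R$ sends some $3$-cycle in $K$ to an element not $K$-conjugate to it (equivalently, that $R$ induces the outer automorphism of $A_7$), which is more hands-on but requires tracking how $R$ acts by conjugation on $K$.
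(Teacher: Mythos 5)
Your proof is correct, and its decisive step is the same as the paper's: you rule out the rival $A_7 \times \mathbb{Z}_2$ by showing that a central involution of $H = K\langle R\rangle$ would be forced into $Z = Z(G)$, which is impossible because $K\langle R\rangle \cap Z$ is trivial --- exactly the contradiction the paper derives for its hypothetical central element $kR$. The difference is how the two candidates arise. The paper writes $K\langle R\rangle = K \rtimes \langle R\rangle \cong A_7 \rtimes \mathbb{Z}_2$ and simply cites as well known that such a group must be $A_7 \times \mathbb{Z}_2$ or $S_7$; you instead prove this dichotomy: since $Z(K)=1$, the centralizer $C_H(K)$ meets $K$ trivially, hence has order at most $2$, and either it is trivial --- so conjugation embeds $H$ into $\mathrm{Aut}(A_7)\cong S_7$, an isomorphism by the order count $|H|=5040$ --- or else it supplies the central involution making $H \cong A_7 \times \mathbb{Z}_2$. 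So your argument is self-contained where the paper's is cited: you need only the classical facts $Z(A_7)=1$ and $\mathrm{Aut}(A_7)\cong S_7$, at the cost of a longer write-up, while the paper's version is shorter but leans on the classification of index-$2$ extensions of $A_7$ (which is standardly proved by precisely your centralizer argument). Your center computation $Z(H)=1$ via $Z(G) = Z(H) \times Z(Z)$ and $|Z(G)|=4$ is a mild repackaging of the paper's phrasing, and it is not circular, since the decomposition $G \cong K\langle R\rangle \times Z$ is established in the paper before the lemma and independently of it.
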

\begin{proof}
$K$ is normal in $K\langle R \rangle$ and $K \cap \langle R \rangle = \{e\}$, so $K\langle R \rangle = K \rtimes \langle R \rangle \cong A_7 \rtimes \mathbb{Z}_2$.\footnote{The structure of $K \rtimes \langle R \rangle$ is given by the automorphism $\phi_R$ of $K$ defined by $\phi_R(k) = RkR^{-1}$. That is, for $k_1, k_2 \in K$ and $r_1, r_2 \in \langle R \rangle$, multiplication is defined as $(k_1, r_1) \cdot (k_2, r_2) := (k_1 r_1 k_2 r_1^{-1}, r_1 r_2)$.} It is well known that $A_7 \rtimes \mathbb{Z}_2$ is isomorphic to either $A_7 \times \mathbb{Z}_2$ or $S_7$, so it remains to show the former is false. If it held, then $K\langle R \rangle$ would contain an element $kR$ of order $2$, with $k \in K$, commuting with all of $K\langle R \rangle$. Since $Z$ is the center, then $kR$ commutes with all of $G=K\langle R \rangle Z$, and thus $kR \in Z$. This is a contradiction, since $K\langle R \rangle \cap Z$ is trivial. 
This gives us $K \langle R \rangle \cong S_7$.

\end{proof}

\begin{lemma}\label{lem:Z}
The center $Z$ of the Varikon box group is a Klein four-group $(\mathbb{Z}_2)^2$.
\end{lemma}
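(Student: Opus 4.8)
The plan is to use the homomorphism $\varphi:G\to(\mathbb{Z}_2)^3$ already in hand to pin down the isomorphism type of $Z$ with no further computation. Since it has been verified that $\vert Z\vert = 4$, the center is either $\mathbb{Z}_4$ or $(\mathbb{Z}_2)^2$, and the two are distinguished by whether every nontrivial element is an involution. So the whole lemma reduces to showing that $z^2 = e$ for each $z \in Z$.

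To establish this, first observe that $(\mathbb{Z}_2)^3$ has exponent $2$, so for any $z \in Z$ we have $\varphi(z^2) = \varphi(z) + \varphi(z) = (0,0,0)$; hence $z^2 \in \ker\varphi = K$. On the other hand $Z$ is a subgroup, so $z^2 \in Z$ as well, giving $z^2 \in K \cap Z$. Now $K \leqslant K\langle R\rangle$, and it was already noted (by inspection of Figure~\ref{fig:center}) that $K\langle R\rangle \cap Z$ is trivial; therefore $z^2 = e$. Thus every element of $Z$ has order dividing $2$, and a group of order $4$ with this property is the Klein four-group, so $Z \cong (\mathbb{Z}_2)^2$.

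The only point needing care is the triviality of $K \cap Z$, which is exactly where the geometry enters: $K$ consists precisely of the move sequences fixing the empty space, whereas each of the three nontrivial elements of $Z$ displaces the empty space to a different corner (it realizes a $180^\circ$ rotation of the whole box, visible in Figure~\ref{fig:center}). This is already recorded in the paragraph preceding the lemma, so no new work is required. As an independent sanity check, one could note that the three $180^\circ$ rotations about mutually perpendicular axes classically generate a copy of $(\mathbb{Z}_2)^2$ inside $SO(3)$, or simply square the explicit words given in the footnote; either route confirms that $Z$ is not cyclic.
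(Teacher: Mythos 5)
Your proof is correct, and it takes a genuinely different route from the paper's. The paper argues geometrically: it reads off from Figure~\ref{fig:center} that the nontrivial elements of $Z$ act as $180^{\circ}$ rotations of the whole box about the three axes, asserts that such rotations commute with every sequence and square to the identity, and combines this with the computed $\vert Z \vert = 4$ to conclude $Z \cong (\mathbb{Z}_2)^2$. You instead stay inside the algebra already set up: $(\mathbb{Z}_2)^3$ has exponent $2$, so $\varphi(z^2)$ is trivial and $z^2 \in K$ for every $z \in Z$; since $z^2$ also lies in $Z$ and $K \cap Z \leqslant K\langle R \rangle \cap Z = \{e\}$ (a fact recorded just before the lemma), every element of $Z$ is an involution, and a group of order $4$ of exponent $2$ is the Klein four-group. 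Your version is leaner on unverified claims: the paper's ``it is clear that'' step --- that the rotation sequences commute with every other sequence --- is exactly the kind of assertion that takes a moment to justify, whereas each of your steps follows immediately from facts the paper has already established ($\vert Z \vert = 4$, the homomorphism $\varphi$, and the trivial intersection $K\langle R \rangle \cap Z$). What the paper's proof buys in exchange is the physical identification of the center with whole-box rotations, which your argument never needs but which is the conceptual payoff of Figure~\ref{fig:center}. Both arguments ultimately rest on the same computational inputs, so yours is a legitimate and arguably tighter substitute.
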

\begin{proof}
Note by Figure~\ref{fig:center} that applying a $180^{\circ}$ rotation of the whole Varikon Box to any configuration maintains the numbers aligned along the $U$, $B$, or $R$ axes. From this it is clear that sequences yielding these configurations commute with every other sequence, and each non-trivial element has order $2$. It follows that $Z \cong (\mathbb{Z}_2)^2$.
\end{proof}

\begin{theorem}\label{thm:varikon_group}
The group of the Varikon box $G$ is isomorphic to $S_7 \times (\mathbb{Z}_2)^2$.
\end{theorem}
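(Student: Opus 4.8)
The plan is to assemble the theorem directly from the internal direct product decomposition established in the paragraph preceding Lemma~\ref{lem:K} together with the two structural lemmas. Recall that that paragraph already proves $G \cong K\langle R \rangle \times Z$: the subgroup $Z$ commutes with $K\langle R \rangle$ because $Z$ is the center of $G$, the intersection $K\langle R \rangle \cap Z$ is trivial (no element of $K\langle R \rangle$ moves the empty space far enough to realize the nontrivial configurations of Figure~\ref{fig:center}), and the order count $\lvert K\langle R \rangle Z\rvert = \tfrac{5040 \cdot 4}{1} = 20160 = \lvert G\rvert$ forces $K\langle R \rangle Z = G$. These three facts are exactly the criterion for an internal direct product, so the decomposition is available for free.

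With the decomposition in hand, the remaining step is pure substitution. First I would invoke Lemma~\ref{lem:K}, which identifies the first factor as $K\langle R \rangle \cong S_7$, and then Lemma~\ref{lem:Z}, which identifies the second factor as $Z \cong (\mathbb{Z}_2)^2$. Combining these with $G \cong K\langle R \rangle \times Z$ yields
\begin{equation}
    G \;\cong\; K\langle R \rangle \times Z \;\cong\; S_7 \times (\mathbb{Z}_2)^2,
\end{equation}
which is the claim.

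I do not expect any genuine obstacle here: all of the difficulty has been front-loaded into the lemmas and into verifying the direct product criterion. The only point worth stating explicitly, rather than glossing over, is that the decomposition is a genuine \emph{direct} product (both factors commuting, trivial intersection, and jointly generating $G$), since that is what lets the two isomorphisms be combined factor by factor; but this was already checked in the text above. Thus the proof is essentially a one-line corollary of the preceding results.
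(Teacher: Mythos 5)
Your proposal is correct and matches the paper's argument exactly: the paper also relies on the internal direct product decomposition $G \cong K\langle R \rangle \times Z$ established in the text before the lemmas, and its proof of Theorem~\ref{thm:varikon_group} is simply that the result follows from Lemma~\ref{lem:K} and Lemma~\ref{lem:Z}. You have merely spelled out the substitution step that the paper calls trivial, which is a fine (indeed slightly more careful) presentation of the same proof.
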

\begin{proof}
This follows trivially from Lemma~\ref{lem:K} and Lemma~\ref{lem:Z}.
\end{proof}

\section{The $A_5$ and $A_6$ Shortest Word Problem}
\label{sec:shortest_word}

One way to proceed toward a solution heuristic is by limiting the configurations to reduce the size of the problem. In practice, it is simple to locate the piece belonging in the position opposite of the empty space (the piece labelled $1$) and solve it. If we only consider the configurations in $V_{fix}$ where piece $1$ is in the solved position, then we could solve the remaining pieces with $\cycle[\;]{3,4,7}$, $\cycle[\;]{2,4,6}$, and $\cycle[\;]{5,6,7}$, by alternating between any two of $R,U,$ and $B$. With these restrictions, we can visualize the puzzle as just these $3$-cycles on pieces $2,3,4,5,6,$ and $7$, but for convenience, we will relabel these from $1$ to $6$, so that the permutations are now $\cycle[\;]{1,2,3}$, $\cycle[\;]{3,4,5}$ and $\cycle[\;]{5,6,1}$. Figure~\ref{fig:subproblem} shows a visual representation of this simplified puzzle.

One can easily verify that these 3-cycles generate $A_6$. This reduces the puzzle to a word problem: given a permutation in $A_6$, what is the shortest way to write it as a product of these 3-cycles and their inverses? We can solve this sub-problem by performing the inverse of this product. We can even reduce it further, by solving piece $6$ of the sub-problem (which is easy in practice). This leaves us with solving the remaining $5$ pieces using only the $3$-cycles $\cycle[\;]{1,2,3}$ and $\cycle[\;]{3,4,5}$, which are enough to generate $A_5$ (a visualization of this is shown in Figure~\ref{fig:subproblem}).

\begin{figure}[H]
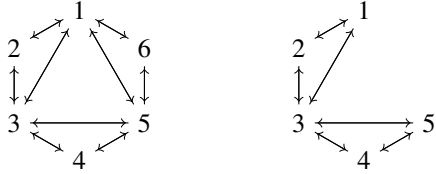

    \centering
    \cyclesix{1}{2}{3}{4}{5}{6}
    \hspace{1cm}
    \cyclefive{1}{2}{3}{4}{5}
    \caption{A visualization of the 2$\times$2$\times$2 Varikon Box sub-problems. The left assumes the piece opposite the empty space is in the solved position, and restricts to three 3-cycles. The right assumes two pieces are solved, and restricts to two 3-cycles.}
    \label{fig:subproblem}
\end{figure}

We computed the shortest-length product of permutations in $A_5$ in terms of $\cycle[\;]{1,2,3}$ and $\cycle[\;]{3,4,5}$. The worst-case found was the permutation $\cycle[\;]{1,2}\cycle[\;]{4,5}$, with a word-length of $6$. Furthermore, it was the only permutation to have this length:

\begin{equation}
    \cycle[\;]{1,2}\cycle[\;]{4,5} = \cycle[\;]{3,4,5}^{-1}\cycle[\;]{1,2,3}\cycle[\;]{3,4,5}\cycle[\;]{1,2,3}^{-1}\cycle[\;]{3,4,5}^{-1}\cycle[\;]{1,2,3}.
\end{equation}

We then computed the shortest-length product of permutations in $A_6$ in terms of the generators $\cycle[\;]{1,2,3}, \cycle[\;]{3,4,5},$ and $\cycle[\;]{5,6,1}$. We found $46$ permutations achieve the maximum word-length of $5$. Which corresponds with the permutation $\cycle[\;]{2,4,6}$:

\begin{equation}
    \cycle[\;]{2,4,6} = \cycle[\;]{1,2,3}\cycle[\;]{3,4,5}^{-1}\cycle[\;]{5,6,1}^{-1}\cycle[\;]{3,4,5}\cycle[\;]{1,2,3}^{-1}.
\end{equation}

\begin{figure}[H]
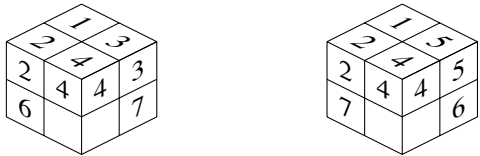

    \centering
    \varikon{1}{3}{2}{4}{5}{7}{6}{}
    \hspace{1cm}
    \varikon{1}{5}{2}{4}{3}{6}{7}{}
    \caption{Examples of the worst-cases for the two sub-problems. The left shows the $A_5$ sub-problem, which takes $24$ moves with this method. The right shows the $A_6$ sub-problem, which takes $20$ moves.}
    \label{fig:worst_subproblem} 
\end{figure}

These examples are shown in their puzzle-form in Figure~\ref{fig:worst_subproblem}. Setting up the $A_6$ sub-problem takes at most $2$ moves, since the orientation of the puzzle can be freely changed, and the worst-case scenario is when the piece labelled $1$ starts adjacent to the empty space. Combining this with the worst-case for the $A_6$ word sub-problem, it takes at most $22$ moves to solve the Varikon Box with this method, assuming one can solve the shortest word problem. This is comparable to the known worst-case which is $19$, found by a brute-force method~\cite{Scherphuis}.

\section{Conclusion and Future Work}
\label{sec:conclusion}
We have analyzed the 2$\times$2$\times$2 Varikon Box by describing moves of the puzzle as a group action on its configurations. The group associated with sequences of moves has an order equal to the number of reachable configurations, and is isomorphic to $S_7~\times~(\mathbb{Z}_2)^2$. Additionally, there exist larger versions of the Varikon Box (for example, 3$\times$3$\times$3 and 4$\times$4$\times$4). It remains to be seen if a similar analysis can be applied to an $n\times n \times n$ Varikon Box. More abstractly, one could consider higher-dimensional variants: that is, a group generated by $\{X_1,...,X_k\}$, where $X_i^n = e$ for each $i$, which could have further applications to discrete dynamical systems in general.

\section*{Acknowledgements}
This work was supported  by the Natural Sciences and Engineering Research Council of Canada (NSERC), funding ref.~RGPIN-2019-04669, and the University of Waterloo.

\bibliographystyle{amsplain}
\bibliography{referenc}
\end{document}